\providecommand{\U}[1]{\protect\rule{.1in}{.1in}}
\newtheorem{theorem}{Theorem}[section]
\newtheorem{proposition}[theorem]{Proposition}
\newtheorem{corollary}[theorem]{Corollary}
\newtheorem{remark}[theorem]{Remark}
\begin{document}

\title{\textsc{Cotype and absolutely summing linear operators}}
\author{Geraldo Botelho, Daniel Pellegrino\thanks{Supported by CNPq Grant
308084/2006-3.}~ and Pilar Rueda\thanks{Supported by Ministerio de
Ciencia e Innovaci\'{o}n MTM2008-03211/MTM.\hfill\newline2000
Mathematics Subject Classification: 46B20, 47B10.}}
\date{}
\maketitle

\begin{abstract}
Cotype is used in this paper to prove new results concerning the
existence of approximable (hence compact) non-absolutely summing linear operators between Banach spaces. We derive consequences that
extend/generalize/complement some classical results of Bennett \cite{bennett}, Davis and Johnson \cite{davis}, and Lindenstrauss and Pe\l czy\'{n}ski \cite{Lindenstrauss}. We also point
out that some of our results are sharp.
\end{abstract}

\vspace*{-1.0em}

\section*{Introduction}
The central role played by the concept of cotype in the theory of
absolutely summing linear (and nonlinear) operators is well known
(the linear case is folklore, for the nonlinear case, see, e.g.,
\cite{B, studia}). Cotype has been traditionally used to show that
operators between certain spaces are always absolutely summing, that
is, cotype has been a valuable tool to show how abundant absolutely
summing operators are. In this paper we show how cotype can also be
used to prove the existence of non-absolutely summing operators. For
instance we prove in Corollary \ref{corol} that if $X$ and $Y$ are
infinite-dimensional Banach spaces and $\cot Y>\max\{2,r\}$, then
there exists an approximable (hence compact, weakly compact, completely continuous, strictly singular, strictly cosingular, etc) linear operator from $X$ to $Y$ that fails to
be $r$-summing. We also show that some of our results are somehow sharp. \\
\indent These results and their consequences follow the line of
classical results of Bennett \cite{bennett}, Davis and Johnson
\cite{davis}, Lindenstrauss and Pe\l czy\'{n}ski \cite{Lindenstrauss} and
complement/generalize
some of their information. More precisely:\\
$\bullet$ Non-coincidence results for absolutely summing operators
between $\ell_p$-spaces of \cite{bennett} are shown to hold true for
operators between much more general spaces (Proposition
\ref{propos});\\
$\bullet$ The existence of compact non-absolutely summing operators
on superreflexive spaces of \cite{davis} is extended to operators on
arbitrary infinite-dimensional spaces provided that the infimum of the cotypes of the
range spaces is not 2 (Corollary \ref{corol});\\
$\bullet$ In \cite{Lindenstrauss} it is proved that if every
operator from $X$ to $Y$ is absolutely summing, then $\cot X = 2$;
in Proposition \ref{corol1}(c) we improve this result by showing that with weaker conditions we
can conclude that $\cot X = \cot Y = 2$.\\
\indent If every continuous linear operator from $X$ to any Banach space $Y$ is $(q,1)$-summing, we can call on a deep result due to Maurey and Pisier \cite{MPi} (cf. \cite[Theorem 14.5]{DJT}) to conclude that $\cot X \leq q$. In Corollary \ref{gghh} we combine Maurey and Pisier's result with Theorem \ref{jun5} to show that the conclusion  $\cot X \leq q$ can be derived from the weaker condition that every {\it approximable} linear operator from $X$ to {\it some} infinite-dimensional Banach space $Y$ with no finite cotype is $(q,1)$-summing.

\section{Notation and background}

Throughout this paper $X$ and $Y$ will
stand for Banach spaces over $\mathbb{K}=\mathbb{R}$ or $\mathbb{C}$ and ${\cal L}(X,Y)$ means the space of bounded linear operators from $X$ to $Y$ endowed with the usual sup norm. The closed unit ball of $X$ is denoted by $B_X$ and the topological dual of $X$ by $X^*$. We follow the standard terminology of absolutely summing operators (see \cite{DJT}). Given $x_1, \ldots, x_n \in X$,  $$\|(x_j)_{j=1}^n\|_{w,p} = \sup_{\varphi \in B_{X^*}} \left( \sum_{j=1}^n |\varphi(x_j)|^p \right)^\frac1p.$$
A linear operator $u \in {\cal L}(X,Y)$ is absolutely $(q,p)$-summing if there is a constant $C \geq 0$ such that
$$\left( \sum_{j=1}^n \|u(x_j)\|^q \right)^\frac1q \leq C \|(x_j)_{j=1}^n\|_{w,p}$$
for every $n \in \mathbb{N}$ and $x_1, \ldots, x_n \in X$. The space of absolutely $(q,p)$-summing linear operators from $X$ to $Y$ is denoted by $\Pi_{q,p}(X,Y)$. The $(q,p)$-summing norm of $u$, defined as the infimum of the constants $C$ working in the inequality, is represented by $\pi_{q,p}(u)$. If $p = q$ we say simply absolutely $p$-summing operator and write $\Pi_{p}(X,Y)$ and $\pi_{p}(u)$ for the space of absolutely $p$-summing operators and the $p$-summing norm of $u$, respectively.\\
\indent The notation $\cot X$ denotes the infimum of the cotypes
assumed by $X$. The identity operator on $X$ is denoted by
$id_{X}$.\\
\indent By ${\cal A}(X,Y)$ we mean the subspace of ${\cal L}(X,Y)$ of approximable operators, that is, the closure of the finite rank operators in the usual operator sup-norm.

\section{Results}\label{vectorvalued}

Our first aim is to prove that if $\cot Y> \max\{2,r\}$ and $X$ is
infinite-dimensional, then one can find an approximable non-$r$-summing linear
operator from $X$ to $Y$. Consequences of this
result complement and generalize classical results from \cite{bennett, davis,
Lindenstrauss}. Furthermore, it is sharp in the sense that it is not valid for operators into cotype 2 spaces.

The proof of the next result is a far reaching refinement of the arguments used in
\cite{studia, ann} for spaces with unconditional Schauder basis, which, in
their turn, were inspired in the classical paper \cite{Lindenstrauss}.

\begin{theorem}\label{jun5} Let $X$ and $Y$ be infinite-dimensional Banach spaces,
$2 \leq p < \infty$ and $q,r >0$
such that $\cot Y \geq p>q \geq r.$ If ${\cal A}(X,Y) \subseteq \Pi_{q,r}(X,Y)$, then $id_{X}$ is
$(\frac{pq}{p-q},r)$-summing.
\end{theorem}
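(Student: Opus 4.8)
The plan is to prove directly that $id_{X}$ satisfies the $(\tfrac{pq}{p-q},r)$-summing inequality, feeding the hypothesis through a family of finite-rank (hence approximable) operators assembled from the given vectors. First I would invoke the closed graph theorem: since ${\cal A}(X,Y)$ and $\Pi_{q,r}(X,Y)$ are Banach under their natural norms and the inclusion ${\cal A}(X,Y)\hookrightarrow\Pi_{q,r}(X,Y)$ has closed graph, the hypothesis ${\cal A}(X,Y)\subseteq\Pi_{q,r}(X,Y)$ furnishes a constant $K$ with $\pi_{q,r}(u)\le K\|u\|$ for all $u\in{\cal A}(X,Y)$. This uniform estimate is what makes the construction quantitative.

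Second, I would use the cotype hypothesis to manufacture test vectors in $Y$. Since $\cot Y\ge p$, the space $Y$ fails cotype $s$ for every $s<p$, and from this (together with $\dim Y=\infty$) I would extract, for each $n$, normalized vectors $y_1,\dots,y_n\in Y$ obeying an upper $p$-estimate $\|\sum_j c_jy_j\|\le(\sum_j|c_j|^p)^{1/p}$ for all scalars $c_j$. Fixing $x_1,\dots,x_n\in X$ with $\|(x_j)_{j=1}^n\|_{w,r}\le 1$, I would pick norming functionals $\varphi_j\in B_{X^*}$ with $\varphi_j(x_j)=\|x_j\|$, choose weights $t_j\ge 0$, and for each sign vector $\varepsilon\in\{-1,1\}^n$ form the rank-$n$ operator $u_\varepsilon=\sum_j\varepsilon_j t_j\,\varphi_j\otimes y_j\in{\cal A}(X,Y)$. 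By convexity the norm of $u_\varepsilon$ is attained at sign coefficients, so the upper $p$-estimate yields, uniformly in $\varepsilon$, $\|u_\varepsilon\|\le\max_{\delta\in\{-1,1\}^n}\|\sum_j t_j\delta_jy_j\|\le(\sum_j t_j^p)^{1/p}$.

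Third comes the averaging that kills the off-diagonal terms, the point where the argument departs from the unconditional-basis case of \cite{studia,ann}. Applying $\pi_{q,r}(u_\varepsilon)\le K\|u_\varepsilon\|$ to $(x_k)$ and averaging over $\varepsilon$, I would bound the left-hand side below by the convexity estimate $\mathrm{Avg}_\varepsilon\|\sum_j\varepsilon_j v_j\|\ge\|v_k\|$ (conditioning on the single sign $\varepsilon_k$), applied with $v_j=t_j\varphi_j(x_k)y_j$, whose $k$-th term has norm $t_k\varphi_k(x_k)=t_k\|x_k\|$; Kahane's inequality lets me pass to the $q$-th moment even when $q<1$, at the cost of a universal constant. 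This produces $\sum_k(t_k\|x_k\|)^q\lesssim K^q(\sum_j t_j^p)^{q/p}$ for every $t\ge 0$.

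Finally, since $\tfrac1q=\tfrac1s+\tfrac1p$ with $s=\tfrac{pq}{p-q}$, optimizing over $t$ is precisely Hölder duality: writing $w_k=t_k^q$, the inequality becomes $\langle w,(\|x_k\|^q)_k\rangle\lesssim K^q\|w\|_{p/q}$, so the conjugate exponent $(p/q)'=\tfrac{p}{p-q}$ gives $(\sum_k\|x_k\|^s)^{1/s}\lesssim K$, the desired $(\tfrac{pq}{p-q},r)$-summing inequality for $id_X$. The hard part will be the second step: extracting from the bare hypothesis $\cot Y\ge p$ normalized vectors with a genuine upper $p$-estimate, strong enough to control $\|u_\varepsilon\|$ uniformly through the maximum over signs, rather than merely the small Rademacher average that the failure of cotype supplies directly.
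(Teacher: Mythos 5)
Your proposal is correct once its one unresolved step is filled by the right citation, and it is structurally parallel to the paper's proof: the same uniform bound $\pi_{q,r}(u)\le K\|u\|$ obtained by a Baire-category argument (the paper uses the Open Mapping Theorem on $({\cal A}(X,Y),\pi_{q,r})\to({\cal A}(X,Y),\|\cdot\|)$, you use the Closed Graph Theorem --- equivalent), the same finite-rank test operators built from norming functionals and weights, and the same H\"older-duality finish using $\frac{1}{q}=\frac{1}{s}+\frac{1}{p}$ with $s=\frac{pq}{p-q}$. The genuine divergence is in how the diagonal term is isolated. The paper extracts from Maurey--Pisier (\cite[Theorem 14.5]{DJT}, together with the attainment of the supremum, \cite[p.~304]{DJT}) the full finite factorization of $\ell_p\hookrightarrow\ell_\infty$: vectors $y_1,\dots,y_n$ satisfying both the upper $p$-estimate and the lower bound $\Vert\sum_j a_jy_j\Vert\ge C_1\max_j|a_j|$; the lower bound gives $\Vert u(x_k)\Vert\ge C_1|\mu_k|^{1/q}\Vert x_k\Vert$ at once, with a single deterministic operator per weight vector, no averaging, and no moment inequalities, valid for all $q>0$. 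You instead demand only the upper $p$-estimate plus normalization, and recover the diagonal by sign-randomization, the triangle-inequality bound $\mathrm{Avg}_\varepsilon\Vert\sum_j\varepsilon_jv_j\Vert\ge\Vert v_k\Vert$, and Kahane's inequality to handle $q<1$; those steps all check out. This is conceptually interesting (it shows the lower $\ell_\infty$-estimate is dispensable), but it buys nothing here, because the only way to produce your upper-$p$-estimate vectors is the very same Maurey--Pisier theorem: an upper $p$-estimate with $p>2$ for $n$ normalized vectors is impossible in Hilbert space (Rademacher averaging gives $n\le C^2n^{2/p}$), so such vectors cannot be manufactured ``by hand'' from the failure of cotype $s<p$ --- their existence is exactly the hard direction of \cite[Theorem 14.5]{DJT}. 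So your flagged ``hard part'' is not a gap to be closed by elbow grease; it \emph{is} that theorem, and note you also need the attainment of the supremum there, which is crucial when $\cot Y=p$ exactly (a case the hypothesis $\cot Y\ge p$ permits). Once you cite it, you get the lower $\ell_\infty$-estimate for free, at which point the paper's deterministic argument is the shorter path.
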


\begin{proof} Since $Y$ is infinite-dimensional, from \cite[Theorem
14.5]{DJT} we have that
\begin{equation*}
\label{cot}\cot Y = \sup\{2\leq s\leq\infty;\text{ }Y\text{ finitely
factors the formal inclusion
}\ell_{s}\hookrightarrow\ell_{\infty}\},
\end{equation*}
and from \cite[p. 304]{DJT} we know that this supremum is attained.
So $Y$ finitely factors the formal inclusion $\ell_p
\hookrightarrow\ell_{\infty}$, that is, there exist $C_{1},C_{2}>0$
such that for every $n\in\mathbb{N}$,
there are $y_{1},\ldots,y_{n} \in Y$ so that%
\begin{equation*}
C_{1}\left\Vert (a_{j})_{j=1}^{n}\right\Vert _{\infty}\leq\left\Vert
{\sum\limits_{j=1}^{n}}
a_{j}y_{j}\right\Vert \leq C_{2}\left(
{\sum\limits_{j=1}^{n}}
\left\vert a_{j}\right\vert ^{p}\right)  ^{\frac{1}{p}} \label{vvvv}%
\end{equation*}
for every $a_{1},\ldots,a_{n}\in\mathbb{K}.$

From the inequality $\| \cdot \| \leq \pi_{q,r}(\cdot)$ and the
assumption ${\cal A}(X,Y) \subseteq \Pi_{q,r}(X,Y)$ it follows easily that ${\cal A}(X,Y)$ is closed with respect to $\pi_{q,r}$. So we can use the
Open Mapping Theorem to get a constant $K>0$ such that $\pi_{q,r}(u)\leq K\Vert u\Vert$ for
all linear operators
 $u \in {\cal A}(X,Y).$ Let $n\in\mathbb{N}$
and $x_{1},\ldots,x_{n}\in X$ be given. Consider $x_{1}^{\ast},\ldots
,x_{n}^{\ast}\in B_{X^{\ast}}$ so that $x_{j}^{\ast}(x_{j})=\left\Vert
x_{j}\right\Vert $ for every $j=1,\ldots,n$. Let $\mu_{1},\ldots,\mu_{n}$ be
scalars such that $\sum\limits_{j=1}^{n}|\mu_{j}|^{s}=1,$ where $s=\frac{p}%
{q}.$ Let $$u\colon X\longrightarrow Y~,~
u(x)=\sum\limits_{j=1}^{n}\left\vert \mu_{j}\right\vert ^{\frac{1}{q}}%
x_{j}^{\ast}(x)y_{j}.$$
It is clear that $u$ is a finite rank operator, so $u\in {\cal A}(X,Y)$. Moreover, for every $x\in X$,%
\begin{align*}
\left\Vert u(x)\right\Vert  &  =\left\Vert \sum\limits_{j=1}^{n}\left\vert
\mu_{j}\right\vert ^{\frac{1}{q}}x_{j}^{\ast}(x)y_{j}\right\Vert \leq
C_{2}\left(  \sum\limits_{j=1}^{n}\left\vert \left\vert \mu_{j}\right\vert
^{\frac{1}{q}}x_{j}^{\ast}(x)\right\vert ^{p}\right)  ^{\frac{1}{p}}\\
&  \leq C_{2}\left(  \sum\limits_{j=1}^{n}\left\vert \mu_{j}\right\vert
^{\frac{p}{q}}\right)  ^{\frac{1}{p}}\left\Vert x\right\Vert%
=C_{2}\left\Vert x\right\Vert .
\end{align*}
We thus have $\pi_{q,r}(u)\leq K\Vert u\Vert\leq
KC_{2}:=K_{1}.$ Note that for $k=1,\ldots,n$, we have%
\begin{align*}
\Vert u(x_{k})\Vert &  =\left\Vert \sum\limits_{j=1}^{n}\left\vert \mu
_{j}\right\vert ^{\frac{1}{q}}x_{j}^{\ast}(x_{k})y_{j}\right\Vert \geq
C_{1}\left\Vert \left(  \left\vert \mu_{j}\right\vert ^{\frac{1}{q}}%
x_{j}^{\ast}(x_{k})\right)  _{j=1}^{n}\right\Vert _{\infty}\\
&  \geq C_{1}\left\vert \mu_{k}\right\vert ^{\frac{1}{q}}x_{k}^{\ast}%
(x_{k})=C_{1}\left\vert \mu_{k}\right\vert ^{\frac{1}{q}}\Vert x_{k}%
\Vert.
\end{align*}
So we have%
\begin{align}
\left(  \sum\limits_{j=1}^{n}\Vert x_{j}\Vert^{q}\left\vert \mu
_{j}\right\vert \right)  ^{\frac{1}{q}}  &  =\left(  \sum\limits_{j=1}%
^{n}\left(  \Vert x_{j}\Vert\left\vert \mu_{j}\right\vert ^{\frac{1}{q}%
}\right)  ^{q}\right)  ^{\frac{1}{q}}\nonumber\\
&  \leq C_{1}^{-1}\left(  \sum\limits_{j=1}^{n}\Vert u(x_{j})\Vert^{q}\right)
^{\frac{1}{q}}\label{poiu}\\
&  \leq C_{1}^{-1} \pi_{q,r}(u)\Vert(x_{j})_{j=1}%
^{n}\Vert_{w,r}.\nonumber
\end{align}
Observing that this last inequality holds whenever $\sum\limits_{j=1}^{n}%
|\mu_{j}|^{s}=1$ and that $\frac{1}{s}+\frac{1}{\frac{s}{s-1}}=1$, we obtain
\begin{align*}
\left(  \sum\limits_{j=1}^{n}  \Vert x_{j}\Vert^{\frac{s}{s-1}%
q} \right)  ^{\frac{1}{\frac{s}{s-1}}}  &  =\sup\left\{ \left\vert
\sum\limits_{j=1}^{n}\mu_{j}\Vert x_{j}\Vert^{q}\right\vert ;\sum
\limits_{j=1}^{n}|\mu_{j}|^{s}=1\right\} \\
&  \leq\sup\left\{  \sum\limits_{j=1}^{n}\left\vert \mu_{j}\right\vert \Vert
x_{j}\Vert^{q};\sum\limits_{j=1}^{n}|\mu_{j}|^{s}=1\right\} \\
&  \overset{(\ref{poiu})}{\leq}\left(  C_{1}^{-1}\pi_{q,r}(u)\|(x_{j})_{j=1}^{n}\Vert_{w,r}\right)  ^{q}\\
&  \leq\left(  C_{1}^{-1}\cdot K_{1}\cdot\Vert(x_{j})_{j=1}^{n}\Vert_{w,r}%
\right)  ^{q},
\end{align*}

and then
\[
\left(  \sum\limits_{j=1}^{n}\Vert x_{j}\Vert^{\frac{s}{s-1}q}\right)
^{\frac{1}{\frac{s}{s-1}q}}\leq C_{1}^{-1}\cdot K_{1}%
\cdot\Vert(x_{j})_{j=1}^{n}\Vert_{w,r}.
\]
Since $\frac{s}{s-1}q=\frac{pq}{p-q}$, $n$ and $x_{1},\ldots,x_{n}\in X$ are
arbitrary, we conclude that $id_{X}$ is $(\frac{pq}{p-q},r)$-summing.
\end{proof}

\begin{corollary}
\label{8jun08}Let $X$ and $Y$ be infinite-dimensional Banach spaces,
$2 \leq p < \infty$ and $q>1$ such that $\cot Y\geq p>q$. If
$\frac{pq}{p-q}>2$ and ${\cal A}(X,Y) \subseteq \Pi_{q,1}(X,Y)$, then $X$ has cotype $\frac
{pq}{p-q}.$

\end{corollary}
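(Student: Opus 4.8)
The plan is to read this off Theorem \ref{jun5} and then feed the resulting summability of $id_X$ into a classical cotype characterization. First I would check that the hypotheses of Theorem \ref{jun5} are met with the choice $r=1$: we have $2\le p<\infty$, and since $q>1$ the chain $\cot Y\ge p>q\ge 1=r$ holds, so all of $2\le p<\infty$ and $q,r>0$ and $\cot Y\ge p>q\ge r$ are in force. Applying Theorem \ref{jun5}, the inclusion ${\cal A}(X,Y)\subseteq\Pi_{q,1}(X,Y)$ forces $id_X$ to be $(\frac{pq}{p-q},1)$-summing. Write $s=\frac{pq}{p-q}$; note $s>q>1$ automatically (since $p>p-q$), while $s>2$ is guaranteed by hypothesis.

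The remaining step is to pass from ``$id_X\in\Pi_{s,1}(X,X)$'' to ``$X$ has cotype $s$''. Here I would invoke the well-known equivalence, valid precisely for exponents $s>2$, between $X$ having cotype $s$ and $id_X$ being $(s,1)$-summing (see \cite{DJT}). The trivial implication (cotype $s$ $\Rightarrow$ $id_X\in\Pi_{s,1}$) is immediate from the elementary bound $\|\sum_{j}\varepsilon_j x_j\|\le\|(x_j)_{j=1}^n\|_{w,1}$ valid for all signs $\varepsilon_j=\pm 1$; the implication we actually need is the nontrivial converse, which is exactly the one that requires $s>2$. Since our $s$ satisfies $s>2$, this characterization applies and yields that $X$ has cotype $s=\frac{pq}{p-q}$, as claimed.

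I expect the delicate point to be the justification of that converse rather than any calculation, and the hypothesis $s=\frac{pq}{p-q}>2$ is exactly what makes it available. As a robustness check one could instead argue that $id_X\in\Pi_{s,1}(X,X)$ makes \emph{every} $u\in{\cal L}(X,Z)$ into any Banach space $Z$ $(s,1)$-summing, via $(\sum_j\|u(x_j)\|^s)^{1/s}\le\|u\|(\sum_j\|x_j\|^s)^{1/s}\le\|u\|\,\pi_{s,1}(id_X)\,\|(x_j)_{j=1}^n\|_{w,1}$, and then apply Maurey and Pisier \cite[Theorem 14.5]{DJT} to get $\cot X\le s$. The drawback of this second route is that it only delivers $\cot X\le s$, not the cotype-$s$ property itself, so it would need extra care in the boundary case $\cot X=s$; the direct characterization sidesteps this and is the route I would adopt.
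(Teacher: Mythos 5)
Your proposal is correct and follows essentially the same route as the paper: apply Theorem \ref{jun5} with $r=1$ to get $id_X \in \Pi_{\frac{pq}{p-q},1}(X,X)$, then use the nontrivial converse (valid exactly for exponents $>2$) that a $(s,1)$-summing identity forces cotype $s$. The only difference is attribution: that converse is Talagrand's theorem \cite{Tal}, which is what the paper cites, rather than a result found in \cite{DJT}; your closing remark correctly identifies why the Maurey--Pisier route alone would be insufficient, since it yields only $\cot X \leq \frac{pq}{p-q}$ and not the attainment of that cotype.
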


\begin{proof} From Theorem \ref{jun5} we conclude that $id_{X}$ is $(\frac {pq}{p-q},1)$-summing.
Since $\frac{pq}{p-q}>2$, using a result due to Talagrand (see \cite{Tal}) we conclude that $X$
has cotype $\frac{pq}{p-q}$.
\end{proof}

Observe that the result above is an interesting improvement of the linear case of \cite[Corollary
2]{studia}, because there, contrary to here, a Schauder basis for $X$ is
required. \\
\indent A well known result due to Maurey and Pisier asserts that
$\cot X = \inf \{2 \leq q \leq \infty : id_X \in \Pi_{q,1}(X,X)\}$
(\cite[Theorem 14.5]{DJT} and \cite{MPi}). Next corollary is a
significant improvement of the linear cases of \cite[Theorem
7]{studia} and \cite[Corollary 2.5]{cot-inf} and complements
information from \cite{MPi}:

\begin{corollary}
\label{gghh}Let $X$ be an infinite-dimensional Banach space. If there is an infinite-dimensional Banach space $Y$ with no finite cotype and such that ${\cal A}(X,Y) \subseteq \Pi_{q,1}(X,Y)$, then $\cot X\leq q$.
\end{corollary}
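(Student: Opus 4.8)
The plan is to feed Theorem \ref{jun5} (taken with $r=1$) into the Maurey--Pisier characterization of cotype recalled just above the statement, namely $\cot X = \inf\{2 \le s \le \infty : id_X \in \Pi_{s,1}(X,X)\}$. The only hypothesis we are handed is that $Y$ has no finite cotype, i.e.\ $\cot Y = \infty$, and the key observation is that this \emph{single} space $Y$ satisfies $\cot Y \ge p$ for \emph{every} real $p$ with $2 \le p < \infty$. Thus Theorem \ref{jun5} becomes available not for one exponent but for an entire one-parameter family of values of $p$, and it is precisely this freedom that will let us push the conclusion down to the sharp exponent.

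First I would fix an arbitrary $p > \max\{2,q\}$ and apply Theorem \ref{jun5} with $r=1$. The chain $\cot Y = \infty \ge p > q \ge 1 = r$ verifies the arithmetic hypotheses, while the standing assumption ${\cal A}(X,Y) \subseteq \Pi_{q,1}(X,Y)$ is exactly the remaining one. The theorem then yields that $id_X$ is $(\tfrac{pq}{p-q},1)$-summing, that is, $id_X \in \Pi_{\frac{pq}{p-q},1}(X,X)$.

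Next I would read this through Maurey--Pisier. Since $p>q>0$ forces $\frac{pq}{p-q}>q$, in the meaningful regime $q \ge 2$ (recall $\cot X \ge 2$ always) the exponent $s=\frac{pq}{p-q}>2$ is admissible in the infimum defining $\cot X$, whence $\cot X \le \frac{pq}{p-q}$. As this holds for every admissible $p$, I would finish by letting $p \to \infty$: since $\frac{pq}{p-q}=q/(1-q/p) \to q$, the infimum over $p$ gives $\cot X \le q$, as desired.

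The computations here are routine, so there is no single hard step; the one point that must not be overlooked is that a \emph{fixed} $p$ only delivers the strictly larger exponent $\frac{pq}{p-q}>q$, and that one recovers the sharp bound $q$ only in the limit $p \to \infty$. The Maurey--Pisier infimum is exactly the device that converts the family of memberships $id_X \in \Pi_{s,1}(X,X)$, valid for all $s>q$, into the single cotype estimate $\cot X \le q$; keeping the admissibility constraint $s \ge 2$ in view throughout is the only bookkeeping required.
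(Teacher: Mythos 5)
Your argument follows exactly the paper's route: apply Theorem \ref{jun5} with $r=1$ and arbitrary finite $p\geq 2$ (possible precisely because $\cot Y=\infty$), obtain $id_X\in\Pi_{\frac{pq}{p-q},1}(X,X)$, and let $p\to\infty$ so that the exponents $\frac{pq}{p-q}$ decrease to $q$; the Maurey--Pisier characterization then converts this family of memberships into the bound $\cot X\leq q$. For $q\geq 2$ your write-up is complete and agrees with the paper's proof step by step.

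The gap is the case $1\leq q<2$, which you set aside as outside ``the meaningful regime'' but which the statement of the corollary does allow (e.g.\ $q=1$). For such $q$ the conclusion $\cot X\leq q$ is incompatible with the universal fact $\cot X\geq 2$, so the corollary can only be true because the hypothesis ${\cal A}(X,Y)\subseteq\Pi_{q,1}(X,Y)$ is itself untenable for $q<2$ --- and that is something that must be \emph{proved}, not merely observed; your parenthetical remark that $\cot X\geq 2$ shows the conclusion would fail, which by itself would make the corollary false, not vacuous. Your own intermediate result supplies the missing contradiction: if $q<2$, then for $p$ large enough $\frac{pq}{p-q}<2$, so $id_X$ would be $(s,1)$-summing for some $s<2$, forcing $X$ to be finite-dimensional by \cite[Theorem 10.5]{DJT}, against the hypothesis. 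This is exactly the step the paper inserts (``hence $q\geq 2$ by \cite[Theorem 10.5]{DJT}'') before invoking Maurey--Pisier; adding that one line closes your proof.
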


\begin{proof} The assumption $\cot Y = \infty$ allows us to make $p\longrightarrow\infty$ in Theorem \ref{jun5} to conclude that $id_{X}$ is
$(q+\varepsilon,1)$-summing for every $\varepsilon>0$. Hence
$q\geq2$ by \cite[Theorem 10.5]{DJT} and $\cot X\leq q$ by
\cite[Theorem 14.5]{DJT}.
\end{proof}
\begin{remark}\rm
Let $X$ be an infinite-dimensional Banach space with an
unconditional Schauder basis $(x_{n})$. Define
\[
\mu_{(x_{n})} = \inf\Bigl\{  t: (a_{j}) \in\ell_{t} {\rm~whenever~}\sum
_{j=1}^{\infty}a_{j}x_{j} \in X \Bigr\}  .
\]
 In \cite[Theorem 7]{studia} it is shown that if $Y$ has no finite cotype and $\Pi_{q,1}(X,Y)=\mathcal{L}(X,Y)
$, then $\mu_{(x_{n})}\leq q$.
Corollary \ref{gghh} improves this information in three directions:
(i) a Schauder basis is not needed in Corollary \ref{gghh}, (ii) if
$X$ has an unconditional Schauder basis, the conclusion $\cot X\leq
q$ of Corollary \ref{gghh} is stronger than the conclusion
$\mu_{(x_{n})}\leq q$ of \cite[Theorem 7]{studia}, (iii) it is
enough to have ${\cal A}(X,Y) \subseteq \Pi_{q,1}(X,Y)$ instead of $\Pi_{q,1}(X,Y)=\mathcal{L}(X,Y)
$.
\end{remark}

Now we are in the position to prove our main results.

\begin{theorem}
\label{kkkk} Let $X$ and $Y$ be
infinite-dimensional Banach spaces. If $\cot Y \geq p>q \geq r > \frac{2pq}{pq + 2p -2q}$, then there exists an approximable non-$(q,r)$-summing linear operator from $X$ to $Y$. 
\end{theorem}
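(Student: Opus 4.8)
The plan is to argue by contradiction: assume that every approximable operator from $X$ to $Y$ is $(q,r)$-summing, i.e. ${\cal A}(X,Y)\subseteq\Pi_{q,r}(X,Y)$, and derive a contradiction. First I would record that the hypotheses are consistent only when $p>2$: combining $r\le q$ with $r>\frac{2pq}{pq+2p-2q}$ gives $\frac{2pq}{pq+2p-2q}<q$, which upon clearing denominators is $q(p-2)>0$, hence $p>2$. In particular $2\le p<\infty$ and $\cot Y\ge p>q\ge r$, so Theorem \ref{jun5} applies and tells us that $id_X$ is $\left(\frac{pq}{p-q},r\right)$-summing.

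The crux is then to contradict this using only that $X$ is infinite-dimensional, via the mixed Dvoretzky--Rogers obstruction: for infinite-dimensional $X$ and finite $s$, membership $id_X\in\Pi_{s,r}(X,X)$ forces $\frac1r-\frac1s\ge\frac12$. I would obtain this directly from Dvoretzky's theorem. For each $n$, $X$ contains a subspace $2$-isomorphic to $\ell_2^n$, so $\pi_{s,r}(id_X)$ dominates, up to a fixed constant, $\pi_{s,r}(id_{\ell_2^n})$. Evaluating the latter on an orthonormal basis $e_1,\dots,e_n$ gives $\left(\sum_{j=1}^n\|e_j\|^s\right)^{1/s}=n^{1/s}$ and $\|(e_j)_{j=1}^n\|_{w,r}=n^{(1/r-1/2)^+}$, whence $\pi_{s,r}(id_{\ell_2^n})\ge n^{1/s-(1/r-1/2)^+}$. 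Since $s=\frac{pq}{p-q}$ is finite, the right-hand side is unbounded in $n$ exactly when $\frac1r-\frac1s<\frac12$, which yields the asserted necessary condition. (Alternatively one may quote the Dvoretzky--Rogers theorem in the form already used in the proof of Corollary \ref{gghh}.)

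Finally I would match the constants. With $s=\frac{pq}{p-q}$ one has $\frac1s=\frac1q-\frac1p$, so the necessary condition $\frac1r-\frac1s\ge\frac12$ becomes $\frac1r-\frac1q+\frac1p\ge\frac12$, i.e. $\frac1r\ge\frac{pq+2p-2q}{2pq}$, i.e. $r\le\frac{2pq}{pq+2p-2q}$. This contradicts the standing hypothesis $r>\frac{2pq}{pq+2p-2q}$. Hence ${\cal A}(X,Y)\not\subseteq\Pi_{q,r}(X,Y)$, that is, some approximable operator from $X$ to $Y$ fails to be $(q,r)$-summing, as desired.

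The step I expect to be the main obstacle is isolating the correct mixed-norm Dvoretzky--Rogers obstruction with the sharp exponent $\tfrac12$ and checking it over the whole admissible range of $r$, since $\|(e_j)_{j=1}^n\|_{w,r}$ changes behaviour at $r=2$; the remaining work is the elementary reciprocal-inequality manipulation above, which simultaneously explains why $\frac{2pq}{pq+2p-2q}$ is precisely the critical threshold and points to the sharpness of the statement.
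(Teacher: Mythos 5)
Your proof is correct and follows essentially the same route as the paper: assume ${\cal A}(X,Y)\subseteq\Pi_{q,r}(X,Y)$, invoke Theorem \ref{jun5} to conclude that $id_X$ is $\left(\frac{pq}{p-q},r\right)$-summing, and contradict $r>\frac{2pq}{pq+2p-2q}$ via the mixed Dvoretzky--Rogers obstruction $\frac1r-\frac1s\geq\frac12$, which the paper simply quotes as \cite[Theorem 10.5]{DJT} and you reprove from Dvoretzky's theorem. Your explicit check that the hypotheses force $p>2$ (so that Theorem \ref{jun5} really applies) is a detail the paper leaves implicit, and is a welcome addition.
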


\begin{proof} Assume that $\mathcal{A}(X,Y) \subseteq \Pi_{q,r}(X,Y)$. By Theorem \ref{jun5} we have that $id_{X}$ is
$(\frac{pq}{p-q},r)$-summing. Since $id_X \neq 0$ we have $r \leq \frac{pq}{p-q}$. By \cite[Theorem 10.5]{DJT} it follows that $\frac{1}{r} - \frac{p-q}{pq} \geq \frac{1}{2}$, that is $r \leq \frac{2pq}{pq + 2p -2q}$.
\end{proof}

A classical result due to Davis and Johnson \cite{davis} asserts
that if $X$ is superreflexive, then there exists a compact
non-$r$-summing linear operator from $X$ to any infinite-dimensional
space $Y$. Let us see that for operators with range spaces $Y$ with
$\cot Y> \max\{2,r\}$ there is no need to impose any condition on the domain space $X$:

\begin{corollary}\label{corol} Let $X$ and $Y$ be infinite-dimensional Banach spaces with $\cot Y> \max\{2,r\}$. Then there
exists an approximable (hence compact) non-$r$-summing linear operator from $X$ to $Y$.
\end{corollary}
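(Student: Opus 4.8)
The plan is to obtain this statement as the special case $q=r$ of Theorem~\ref{kkkk}, since a non-$r$-summing operator is precisely a non-$(r,r)$-summing operator. So I would aim to produce an exponent $p$ with $2\le p<\infty$ for which the hypotheses $\cot Y\ge p>q\ge r>\frac{2pq}{pq+2p-2q}$ of Theorem~\ref{kkkk} all hold with $q$ replaced by $r$.

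The first step is to simplify the troublesome lower bound under the substitution $q=r$. Here $\frac{2pq}{pq+2p-2q}=\frac{2pr}{pr+2p-2r}$, and since $p>2$ forces $pr-2r>0$ the denominator $pr+2p-2r>2p>0$ is positive; clearing it and dividing by $r>0$ shows that the inequality $r>\frac{2pr}{pr+2p-2r}$ is \emph{equivalent} to $r(p-2)>0$, that is, to $p>2$. Thus, when $q=r$, the entire hypothesis of Theorem~\ref{kkkk} reduces to the two conditions $r<p\le\cot Y$ and $p>2$, i.e.\ to $\max\{2,r\}<p\le\cot Y$.

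The second step is to exhibit such a $p$. By hypothesis $\cot Y>\max\{2,r\}$, so the interval $(\max\{2,r\},\cot Y]$ is nonempty: if $\cot Y<\infty$ I would simply take $p=\cot Y$, while if $\cot Y=\infty$ any finite $p>\max\{2,r\}$ works. For this choice Theorem~\ref{kkkk} (with $q=r$) furnishes an approximable non-$(r,r)$-summing linear operator from $X$ to $Y$, which is the desired non-$r$-summing operator; compactness is automatic because approximable operators are compact.

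I do not expect any genuine obstacle here, as the result is essentially a bookkeeping specialization of Theorem~\ref{kkkk}. The only points requiring a little care are the algebraic equivalence ``$r>\frac{2pr}{pr+2p-2r}\iff p>2$'' above (together with the attendant check that the denominator is positive), and the degenerate case $\cot Y=\infty$, where one must pick a finite $p$ rather than $p=\cot Y$ so as to respect the requirement $p<\infty$ inherited from Theorem~\ref{jun5}.
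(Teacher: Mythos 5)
Your proof is correct, and it shares its backbone with the paper's proof --- both specialize Theorem \ref{kkkk} to $q=r$ --- but your handling of the case $r<2$ is genuinely different. The paper splits into two cases: for $2\le r<\cot Y$ it applies Theorem \ref{kkkk} with $p=\cot Y$ and $q=r$, observing only that $\frac{2pq}{pq+2p-2q}<2\le r$; for $r<2$ it does \emph{not} apply Theorem \ref{kkkk} directly, but instead takes the non-$2$-summing approximable operator produced by the first case and invokes the inclusion theorem \cite[Theorem 2.8]{DJT} (so that $\Pi_r\subseteq\Pi_2$ for $r\le 2$) to conclude that this operator fails to be $r$-summing as well. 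Your algebraic observation --- that for $q=r$ and $p\ge 2$ the denominator $pr+2p-2r$ is positive and the constraint $r>\frac{2pr}{pr+2p-2r}$ is equivalent to $r(p-2)>0$, i.e.\ to $p>2$ --- makes this case split unnecessary: Theorem \ref{kkkk} applies verbatim for every $r$ once $\max\{2,r\}<p\le\cot Y$, so you bypass the inclusion theorem entirely. A further point in your favor is the degenerate case $\cot Y=\infty$: the paper's prescription $p=\cot Y$ is, strictly speaking, illegitimate there, since Theorem \ref{jun5}, on which Theorem \ref{kkkk} rests, requires $p<\infty$, whereas you explicitly select a finite $p>\max\{2,r\}$ in that situation. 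In short, your route is more unified and more careful about the edge case, at the cost of a small algebraic verification; the paper's route is quicker to check when $r\ge 2$ (the bound $\frac{2pq}{pq+2p-2q}<2$ is immediate from $q<p$) but outsources the remaining case to a cited theorem.
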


\begin{proof} For $2\leq r<\cot Y$ the result follows from Theorem \ref{kkkk}, with $p = \cot Y$ and $q = r$, because $ \frac{2pq}{pq + 2p -2q}< 2$. The case $r<2$ is a consequence of the inclusion theorem \cite[Theorem 2.8]{DJT}.
\end{proof}

\begin{remark}\rm (a) Grothendieck's theorem $\Pi_1(\ell_1,\ell_2) = {\cal L}(\ell_1,\ell_2)$ makes clear that Theorem \ref{kkkk} and Corollary \ref{corol} are sharp in the sense that they are not valid for operators with range cotype 2
spaces. The case $\cot Y>2$ is also close to optimality. In fact,
from \cite[Corollary 10.10]{DJT} we know that if $Y$ is an
$\mathcal{L}_{q}$-space $(q>2)$ and $r>q=\cot Y$,
then%
\[
\Pi_{r}(c_{0};Y)=\mathcal{L}(c_{0};Y).
\]
(b) It is worth mentioning that ${\cal A}$ is contained in any closed operator ideal ${\cal I}$ (e.g., operators that are compact, weakly, compact, completely continuous, strictly singular, strictly cosingular, Banach-Saks property, Rosenthal property, etc - see \cite{pie}). So, for any closed operator ideal $\cal I$, if $X$ and $Y$ are infinite-dimensional Banach spaces with $\cot Y> \max\{2,r\}$, then there
exists a  non-$r$-summing linear operator from $X$ to $Y$ belonging to $\cal I$.
\end{remark}

A classical theorem due to Lindenstrauss
and Pe{\l}czy\'nski \cite[Proposition 8.1(2)]{Lindenstrauss}
asserts that if $X$ and $Y$ are infinite-dimensional and
$\Pi_1(X,Y)= \mathcal{L}(X,Y)$, then $\cot X = 2$. Part (c) of the next proposition improves this result in the sense that a stronger conclusion is obtained from a weaker assumption.

 \begin{proposition}\label{corol1}~\\
{\rm (a)} Let $X$ be an infinite-dimensional Banach space. If there is an infinite-dimensional Banach space $Y$ with finite cotype such that ${\cal A}(X,Y) \subseteq \Pi_{\frac{2\cot Y}{2+\cot Y},1}(X,Y)$, then $X$ has the Orlicz property (that is, $id_X$ is $(2,1)$-summing).\\
{\rm (b)} If $X$ is infinite-dimensional and ${\cal A}(X,Y) \subseteq \Pi_{r}(X,Y)$ for some infinite-dimensional Banach space $Y$ and some $1 \leq r < 2$, then $\cot Y = 2$.\\
{\rm (c)} If $X$ and $Y$ are infinite-dimensional Banach spaces and every approximable linear operator from $X$ to $Y$ is $1$-summing,
then $\cot X = \cot Y = 2$.
\end{proposition}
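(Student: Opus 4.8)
The plan is to establish the two equalities separately: $\cot Y = 2$ from part (b), and $\cot X = 2$ from Corollary \ref{8jun08} fed a one-parameter family of exponents. The standing hypothesis is $\mathcal{A}(X,Y) \subseteq \Pi_1(X,Y) = \Pi_{1,1}(X,Y)$. Since $\|\cdot\|_q \leq \|\cdot\|_1$ for every $q \geq 1$, any $(1,1)$-summing operator is automatically $(q,1)$-summing with the same constant, so this inclusion upgrades for free to $\mathcal{A}(X,Y) \subseteq \Pi_{q,1}(X,Y)$ for all $q \geq 1$, which is the form in which the earlier results consume the hypothesis.

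First I would obtain $\cot Y = 2$. This is immediate from part (b): taking $r = 1 \in [1,2)$, the assumption $\mathcal{A}(X,Y) \subseteq \Pi_1(X,Y)$ is exactly $\mathcal{A}(X,Y) \subseteq \Pi_r(X,Y)$, so part (b) yields $\cot Y = 2$. In particular $Y$ now has finite cotype, and since $\cot Y = 2$ the only admissible exponent in the range $2 \leq p < \infty$ with $\cot Y \geq p$ is $p = 2$; this is what pins down the choice of $p$ in the next step.

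Then, to get $\cot X = 2$, I would fix an arbitrary $q \in (1,2)$ and apply Corollary \ref{8jun08} with $p = 2$ and this $q$. The hypotheses check out directly: $\cot Y = 2 \geq 2 = p > q > 1$; the inclusion $\mathcal{A}(X,Y) \subseteq \Pi_{q,1}(X,Y)$ holds by the opening remark; and $\frac{pq}{p-q} = \frac{2q}{2-q} > 2$ since $q > 1$. The corollary then gives that $X$ has cotype $\frac{2q}{2-q}$. As $q$ runs through $(1,2)$ the quantity $\frac{2q}{2-q}$ sweeps out all of $(2,\infty)$, so $X$ has cotype $s$ for every $s > 2$; hence $\cot X \leq 2$, and combined with the universal lower bound $\cot X \geq 2$ this gives $\cot X = 2$.

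The main obstacle to keep in mind is that one cannot read off cotype $2$ in a single application: the endpoint $q = 1$ yields only the Orlicz property ($id_X$ being $(2,1)$-summing), and by Talagrand's example the Orlicz property does not force cotype $2$. The whole point of letting $q$ vary over the \emph{open} interval $(1,2)$ is to avoid this endpoint and produce genuine cotype $s$ for every $s > 2$, recovering $\cot X = 2$ only as the infimum of the attained cotypes. Relatedly, one must respect that Corollary \ref{8jun08} needs the strict inequality $q > 1$ (precisely so that $\frac{2q}{2-q} > 2$ and the Talagrand step inside its proof is available), which is why the parameter is taken in $(1,2)$ rather than $[1,2)$.
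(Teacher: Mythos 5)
The main problem is completeness: the proposition has three parts, and your proposal proves only part (c). Part (a) is never addressed, and part (b) --- which your argument for (c) consumes as a black box in its very first step (``taking $r=1$, part (b) yields $\cot Y=2$'') --- is itself one of the claims to be established. Nothing in your text indicates how (a) or (b) would be proved, and (b) is not a triviality: the paper obtains it from Theorem \ref{kkkk} by noting that if $\cot Y>2$ one may take $p=\cot Y$ and $q=r$ there (the constraint $r>\frac{2pq}{pq+2p-2q}$ reduces exactly to $p>2$), which produces an approximable non-$r$-summing operator and contradicts the hypothesis; part (a) comes from Theorem \ref{jun5} with $r=1$, $p=\cot Y$, $q=\frac{2p}{2+p}$, for which $\frac{pq}{p-q}=2$. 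Until those two parts are supplied, your derivation of (c) rests on unproved assertions, so as a proof of the full statement the proposal has a genuine gap.

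That said, the portion you do prove --- (c) granted (b) --- is correct, and it takes a different route from the paper's. The paper applies part (a) at the endpoint: once $\cot Y=2$, we have $\frac{2\cot Y}{2+\cot Y}=1$, so (a) gives that $id_X$ is $(2,1)$-summing, and then $\cot X=2$ follows from Maurey--Pisier's characterization $\cot X=\inf\{2\leq q\leq\infty: id_X\in\Pi_{q,1}(X,X)\}$ in \cite[Theorem 14.5]{DJT}. You instead sweep $q\in(1,2)$ through Corollary \ref{8jun08} with $p=2$ (your upgrade from $\Pi_1$ to $\Pi_{q,1}$ is indeed free), obtaining via Talagrand's theorem that $X$ has genuine cotype $\frac{2q}{2-q}$ for every such $q$, hence cotype $s$ for all $s>2$, hence $\cot X=2$. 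Both routes are sound, and yours trades the paper's use of \cite[Theorem 14.5]{DJT} in the last step for repeated use of Talagrand's theorem hidden in Corollary \ref{8jun08}. However, your closing justification for avoiding the endpoint rests on a conflation: in this paper's notation $\cot X=2$ means only that the \emph{infimum} of the cotypes of $X$ equals $2$, not that $X$ has cotype $2$. Talagrand's example rules out concluding attained cotype $2$ from the Orlicz property, but the Orlicz property does imply $\cot X=2$ by \cite[Theorem 14.5]{DJT}, so the endpoint route is not blocked --- it is precisely the paper's proof --- and your detour, while valid, is not forced.
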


\begin{proof} (a) Choose $r = 1$, $p = \cot Y$ and $q = \frac{2p}{2+p}$ in Theorem \ref{jun5}.\\
(b) This is immediate from Theorem \ref{kkkk}. In fact, if $\cot
Y>2$, we can take $p=\cot Y$ and $q=r$ in Theorem  \ref{kkkk} and
conclude that $\mathcal{A}(X;Y)$ is not contained in $\Pi_{r}(X;Y)$.\\
(c) Putting $r = 1$ in (b) we conclude that $\cot Y = 2$, then (a)
yields that $id_X$ is $(2,1)$-summing. Therefore $\cot X = 2$ by
\cite[Theorem 14.5]{DJT}.
\end{proof}

As announced in the introduction, we shall improve substantially the following information from Bennett \cite[Proposition 5.2(i)]{bennett}:\\
 $\bullet$ $\Pi_{q,1}(\ell_1,\ell_p)\neq \mathcal{L}(\ell_1,\ell_p)$ whenever $2 \leq p < \infty$ and $q < \frac{2p}{2 + p}$;\\
 $\bullet$ $\Pi_{q,2}(\ell_1,\ell_p)\neq \mathcal{L}(\ell_1,\ell_p)$ whenever $2 \leq p < \infty$ and $q < p$.\\
 $\bullet$ $\Pi_{q,1}(\ell_1;\ell_\infty)\neq {\mathcal L}(\ell_1;\ell_\infty)$ whenever $1\leq q<2$.\\
\indent Our improvement says that $\ell_1$ may be replaced by any
infinite-dimensional Banach space, $\ell_p$ may be replaced by
any infinite-dimensional Banach space with $\cot Y = p$ and the existence of a non-absolutely summing operator can be replaced by the existence of an approximable non-absolutely summing operator:

\begin{proposition}\label{propos} Let $X$ and $Y$ be infinite-dimensional Banach spaces.  \\
{\rm (a)} If $\cot Y < \infty$, then there exists an approximable non-$(q,1)$-summing linear operator from $X$ to $Y$ for every $q < \frac{2\cot Y}{2 +\cot Y }$.\\
{\rm (b)} If $\cot Y < \infty$, then there exists an approximable non-$(q,2)$-summing linear operator from $X$ to $Y$ for every $q < \cot Y$.\\
{\rm (c)} If   $\cot Y=\infty$, then there exists an approximable non-$(q,1)$-summing linear operator from $X$ to $Y$ for every $1\leq q<2$.
\end{proposition}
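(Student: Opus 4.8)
The plan is to obtain each non-summing operator directly from Theorem \ref{kkkk}, taking $r=1$ in (a) and (c), $r=2$ in (b), and $p=\cot Y$ whenever $\cot Y<\infty$. The only arithmetic to check is that the hypothesis $r>\frac{2pq}{pq+2p-2q}$ of Theorem \ref{kkkk} matches each stated threshold: with $r=1$ it rearranges to $q<\frac{2p}{2+p}$, and with $r=2$ it rearranges to $p>q$. Everything then reduces to feeding the right $p,q,r$ into Theorem \ref{kkkk} and, for the leftover small values of $q$, to the monotonicity $\Pi_{q,r}\subseteq\Pi_{r,r}$ of the summing classes in the first index.

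For (a) I set $p=\cot Y$ and $r=1$. For $1\le q<\frac{2\cot Y}{2+\cot Y}$ the chain $\cot Y\ge p>q\ge r=1>\frac{2pq}{pq+2p-2q}$ holds (the last inequality being exactly $q<\frac{2p}{2+p}$), so Theorem \ref{kkkk} produces an approximable non-$(q,1)$-summing operator; note this range is nonempty precisely when $\cot Y>2$. For $q<1$ I invoke $\Pi_{q,1}\subseteq\Pi_{1,1}$: a non-$(1,1)$-summing approximable operator, available from the case $q=1$ as long as $\cot Y>2$, is automatically non-$(q,1)$-summing. Part (b) is the same scheme with $r=2$ and $p=\cot Y$: for $2\le q<\cot Y$ the inequality $2>\frac{2pq}{pq+2p-2q}$ is just $p>q$, so Theorem \ref{kkkk} applies, while $q<2$ is handled by $\Pi_{q,2}\subseteq\Pi_{2,2}=\Pi_2$.

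Part (c) is the cleanest: since $\cot Y=\infty$ dominates every finite $p$, given $1\le q<2$ I choose $p$ large enough that $q<\frac{2p}{2+p}$ (possible because $\frac{2p}{2+p}\uparrow 2$), and then $\cot Y\ge p>q\ge 1>\frac{2pq}{pq+2p-2q}$ lets Theorem \ref{kkkk} deliver the desired operator on the whole range $1\le q<2$, with no boundary to worry about.

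The step I expect to be the genuine obstacle is the regime $q<r$ (that is, $q<1$ in (a) and $q<2$ in (b)), since Theorem \ref{kkkk} is stated only for $q\ge r$. The inclusion $\Pi_{q,r}\subseteq\Pi_{r,r}$ removes this difficulty whenever $\cot Y>2$, but it degenerates exactly at the minimal cotype $\cot Y=2$, where the claims reduce to $q<1$ in (a) and $q<2$ in (b). At that single value I would rerun the construction in the proof of Theorem \ref{jun5} with $p=2$ --- observing that this construction never actually uses $q\ge r$ --- to get that $id_X$ is $(\frac{2q}{2-q},r)$-summing, and then \cite[Theorem 10.5]{DJT} closes the argument by forcing $q\ge 1$ in (a) and $q\ge 2$ in (b). Making sure this corner is covered, rather than tacitly omitted, is where the care is needed.
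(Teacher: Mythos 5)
Your proposal is correct, and for parts (a) and (b) it is exactly the paper's argument: the published proof is precisely ``make $r=1$ (resp.\ $r=2$) in Theorem \ref{kkkk}'' with $p=\cot Y$, plus the same arithmetic equivalences you verify ($1>\frac{2pq}{pq+2p-2q}\Leftrightarrow q<\frac{2p}{2+p}$, and $2>\frac{2pq}{pq+2p-2q}\Leftrightarrow p>q$). Two differences are worth recording. First, in (c) the paper argues by contradiction via Corollary \ref{gghh}: if ${\cal A}(X,Y)\subseteq\Pi_{q,1}(X,Y)$ then $\cot X\leq q<2$, which is absurd; your direct application of Theorem \ref{kkkk} with a large finite $p$ is morally identical, since Corollary \ref{gghh} is itself obtained by letting $p\to\infty$ in Theorem \ref{jun5}. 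Second, the regime $q<r$ that you flag as ``the genuine obstacle'' --- and which the paper passes over in silence --- is in fact vacuous: if $0<q<r$ then $\Pi_{q,r}(X,Y)=\{0\}$, because for any $u$ with $u(x_0)\neq 0$, taking $x_1=\cdots=x_n=x_0$ forces $n^{1/q}\|u(x_0)\|\leq C\, n^{1/r}\|x_0\|$ for all $n$, which fails as $n\to\infty$. So in that range any nonzero rank-one operator is an approximable non-$(q,r)$-summing operator, for every infinite-dimensional $Y$ and with no cotype hypothesis whatsoever; this disposes of $q<1$ in (a) and $q<2$ in (b), including the $\cot Y=2$ corner. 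Your alternative treatment of that corner --- rerunning the construction of Theorem \ref{jun5} with $p=2$ (your observation that the construction never uses $q\geq r$ is accurate) and then invoking \cite[Theorem 10.5]{DJT} --- is sound in the cases where it applies, but note that for small $q$ (e.g.\ $q<2/3$ in (a)) the pair $(\frac{2q}{2-q},r)$ has first index below the second, so \cite[Theorem 10.5]{DJT} does not literally apply and you would have to fall back on the triviality just described anyway; better to use it from the start.
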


\begin{proof} (a)  Make $r =1$ in Theorem  \ref{kkkk} and observe that $q < \frac{2p}{2+p} \Longleftrightarrow 1 > \frac{2pq}{pq + 2p - 2q}$.\\
(b) Just make $r = 2$ in Theorem \ref{kkkk}.\\
(c) Otherwise, from Corollary \ref{gghh} it would follow that $\cot X\leq q<2$.
\end{proof}

\begin{remark}\rm Proposition \ref{propos} is sharp: for example, from \cite[Proposition 5.2(i)]{bennett} we know that $\Pi_{\frac{2p}{2+p},1}(\ell_1,\ell_p) = \mathcal{L}(\ell_1,\ell_p)$ for $p \geq 2$ and $\Pi_{p,2}(\ell_1,\ell_p) = \mathcal{L}(\ell_1,\ell_p)$ for $p \geq 2$.
\end{remark}

A final application of our results illustrates, in one single result, the well known relevance of the space $\ell_1$ and of the concept of cotype in the theory of absolutely summing operators:

\begin{proposition} Let $X$ and $Y$ be infinite-dimensional Banach spaces. Let $2 \leq r < \cot Y$ and $q\geq r$ be such that $\Pi_{q,r}(X,Y) = \mathcal{L}(X,Y)$. Then $\mathcal{L}(\ell_1,\ell_{\cot Y}) = \Pi_{q,r}(\ell_1,\ell_{\cot Y})$.
\end{proposition}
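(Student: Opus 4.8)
The plan is to convert the coincidence hypothesis on $(X,Y)$ into a single numerical inequality relating $q$, $r$ and $\cot Y$, and then to recognise that inequality as exactly the endpoint condition under which Bennett guarantees coincidence for operators $\ell_1\to\ell_{\cot Y}$. Write $p=\cot Y$; as recalled in the proof of Theorem \ref{jun5}, this supremum is finite and attained. Since $\Pi_{q,r}(X,Y)=\mathcal{L}(X,Y)$ gives in particular $\mathcal{A}(X,Y)\subseteq\Pi_{q,r}(X,Y)$, I would first apply Theorem \ref{jun5} with this $p$ (in the substantive range $q<\cot Y$, so that $p>q$) to conclude that $id_X$ is $\left(\tfrac{pq}{p-q},r\right)$-summing.

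Next I would extract the inequality exactly as in the proof of Theorem \ref{kkkk}. As $X$ is infinite-dimensional, $id_X\neq0$, so \cite[Theorem 10.5]{DJT} forces $\tfrac1r-\tfrac{p-q}{pq}\ge\tfrac12$, which rearranges to
\[
\frac1r-\frac1q\ \ge\ \frac12-\frac1p .
\]
The key observation is that the right-hand side is Bennett's endpoint exponent: for $q_0=\tfrac{2p}{2+p}$ and $r_0=1$ one computes $\tfrac{1}{r_0}-\tfrac{1}{q_0}=1-\tfrac{2+p}{2p}=\tfrac12-\tfrac1p$.

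I would then call on \cite[Proposition 5.2(i)]{bennett}, the sharp endpoint already quoted before Proposition \ref{propos}, which gives $\Pi_{\frac{2p}{2+p},1}(\ell_1,\ell_p)=\mathcal{L}(\ell_1,\ell_p)$. Because $1=r_0\le r$, because $q_0=\tfrac{2p}{2+p}<2\le r\le q$, and because the displayed inequality says $\tfrac{1}{r_0}-\tfrac{1}{q_0}\le\tfrac1r-\tfrac1q$, the inclusion theorem for mixed $(q,r)$-summing operators (see \cite{DJT}) yields $\Pi_{q_0,r_0}(\ell_1,\ell_p)\subseteq\Pi_{q,r}(\ell_1,\ell_p)$. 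Hence
\[
\mathcal{L}(\ell_1,\ell_p)=\Pi_{\frac{2p}{2+p},1}(\ell_1,\ell_p)\subseteq\Pi_{q,r}(\ell_1,\ell_p)\subseteq\mathcal{L}(\ell_1,\ell_p),
\]
so every inclusion is an equality, which is the claim with $p=\cot Y$.

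The delicate point is the bookkeeping around $\cot Y$ rather than any hard estimate: one must use that the supremum defining $\cot Y$ is attained, so that Theorem \ref{jun5} can be run with $p=\cot Y$ itself, and one must keep $q<\cot Y$, which is what keeps $\tfrac{pq}{p-q}$ finite and drives the whole argument. The endpoint $q\ge\cot Y$ with $r=2$ needs no work, since there $\Pi_{q,2}(\ell_1,\ell_p)\supseteq\Pi_{p,2}(\ell_1,\ell_p)=\mathcal{L}(\ell_1,\ell_p)$ already by \cite[Proposition 5.2(i)]{bennett} together with monotonicity in $q$. The only genuine content is thus the matching of the two appearances of $\tfrac12-\tfrac1p$ and orienting the inclusion theorem so that the larger index $\tfrac1r-\tfrac1q$ corresponds to the larger summing class.
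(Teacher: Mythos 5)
Your overall skeleton (extract a numerical inequality from Theorem \ref{jun5} plus \cite[Theorem 10.5]{DJT}, then transfer it to $(\ell_1,\ell_{\cot Y})$ via Bennett) matches the paper's one-line proof, which combines Theorem \ref{kkkk} with \cite[Proposition 5.2(iv)]{bennett}. But your case analysis is inverted, and that leaves a genuine gap. In what you call the ``substantive range'' $q<\cot Y$, your own inequality annihilates the hypothesis: writing $p=\cot Y$, you derive $\frac1r-\frac1q\ge\frac12-\frac1p$, and since $r\ge2$ gives $\frac1r\le\frac12$, this forces $\frac1q\le\frac1p$, i.e. $q\ge p$, contradicting $q<p$. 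So the case $q<\cot Y$ is vacuous: no infinite-dimensional $X,Y$ with $2\le r\le q<\cot Y$ can satisfy $\Pi_{q,r}(X,Y)=\mathcal{L}(X,Y)$ (the same argument, letting $p\to\infty$, also shows $\cot Y$ must be finite here --- which, incidentally, corrects your claim that finiteness of $\cot Y$ is ``recalled'' in the proof of Theorem \ref{jun5}; only attainment is). Consequently the entire content of the proposition lives in the case $q\ge\cot Y$, which you wave off as an endpoint ``with $r=2$''. That restriction is unjustified: the hypotheses allow $2<r<\cot Y\le q$, for instance $r=3$, $\cot Y=4$, $q=4$, and these are exactly the instances the proposition is about; for them you have no argument at all.

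Moreover, the tools you allow yourself cannot close this gap, which is precisely why the paper cites Bennett's Proposition 5.2(iv) (his coincidence theorem for second index $\ge2$, covering the regime $2\le r<p\le q$) rather than 5.2(i). For $r>2$ one has $\Pi_{q,r}(X,Y)\subseteq\Pi_{q,2}(X,Y)$, because $\|(x_j)\|_{w,r}\le\|(x_j)\|_{w,2}$ makes $(q,r)$-summability the \emph{stronger} property; so your monotonicity step $\Pi_{p,2}\subseteq\Pi_{q,2}$ concludes strictly less than what is needed. And the inclusion theorem cannot bridge from Bennett's $r\in\{1,2\}$ endpoints either: passing from $(q_0,r_0)$ to $(q,r)$ requires $\frac1{r_0}-\frac1{q_0}\le\frac1r-\frac1q$, while for $p=4$, $r=3$, $q=4$ both endpoints $\bigl(\frac{2p}{2+p},1\bigr)$ and $(p,2)$ have gap $\frac12-\frac1p=\frac14$, whereas the target gap is $\frac13-\frac14=\frac1{12}$. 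So Bennett's $r\ge2$ result is not a formal consequence of the $r\le2$ cases plus inclusions; it is an independent input, and omitting it is the missing idea in your proof.
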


\begin{proof} Combine Theorem \ref{kkkk} with \cite[Proposition 5.2(iv)]{bennett}.
\end{proof}

\noindent {\bf Final remark.} The results of this paper can be
adapted, {\it mutatis mutandis}, to absolutely summing homogeneous
polynomials and multilinear mappings. For homogeneous polynomials,
Theorem \ref{jun5} reads as: if $2 \leq p < \infty$ and $q>0$ are
such that $\cot Y \geq p>q \geq \frac rm$, and the the space of absolutely $(q,r)$-summing $m$-homogeneous polynomials from $X$ to $Y$ contains the closure (in the usual sup-norm) of the space of all continuous $m$-homogeneous polynomials of finite type, then $id_X$ is
$(\frac{mpq}{p-q},r)$-summing. The resulting consequences and
related results follow accordingly.\\

\vspace*{1em} \noindent[Geraldo Botelho] Faculdade de Matem\'atica,
Universidade Federal de Uberl\^andia, 38.400-902 - Uberl\^andia, Brazil,
e-mail: botelho@ufu.br.

\medskip

\noindent[Daniel Pellegrino] Departamento de Matem\'atica, Universidade
Federal da Pa-ra\'iba, 58.051-900 - Jo\~ao Pessoa, Brazil, e-mail: dmpellegrino@gmail.com.

\medskip

\noindent[Pilar Rueda] Departamento de An\'alisis Matem\'atico, Universidad de
Valencia, 46.100 Burjasot - Valencia, Spain, e-mail: pilar.rueda@uv.es.


\begin{thebibliography}{99}                                                                                               %

\bibitem {bennett}{\small G. Bennett, \textit{Schur multipliers}, Duke Math. Journal \textbf{44} (1977), 603-639. }

{\small \vspace*{-0.6em} }

\bibitem {B}{\small G. Botelho, \textit{Cotype and absolutely summing
multilinear mappings and homogeneous polynomials}, Proc. Roy. Irish Acad Sect.
A \textbf{97 }(1997), 145-153. }

{\small \vspace*{-0.6em} }

\bibitem {JMAA}{\small G. Botelho and D. Pellegrino, \textit{Absolutely
summing polynomials on Banach spaces with unconditional Schauder basis}, J.
Math. Anal. Appl. \textbf{321} (2006), 50-58. }

{\small \vspace*{-0.6em} }

\bibitem {cot-inf}{\small G. Botelho and D. Pellegrino, \textit{Absolutely
summing linear operators into spaces with no finite cotype}, to appear in
Bull. Belg. Math. Soc. Simon Stevin. }

{\small \vspace*{-0.6em} }

\bibitem {davis}{\small W. J. Davis and W. B. Johnson, {\it Compact non-nuclear
operators}, Studia Math. \textbf{51} (1974), 81-85.}

{\small \vspace*{-0.6em} }

\bibitem {DJT}{\small J. Diestel, H. Jarchow, A. Tonge, \textit{Absolutely
summing operators}, Cambridge University Press, 1995. }

{\small \vspace*{-0.6em} }

\bibitem {Lindenstrauss}{\small J. Lindenstrauss and A.
Pe\l czy\'{n}ski,\ \textit{Absolutely summing operators in $\mathcal{L}_{p}$
spaces and their applications}. Studia Math \textbf{29} (1968), 275-326. }

{\small \vspace*{-0.6em} }

\bibitem {MPi}{\small B. Maurey and G. Pisier \textit{S\'eries de variables al\'eatoires vectorielles ind\'ependantes et propri\'et\'es g\'eom\'etriques des espaces de Banach}, Studia Math. \textbf{58} (1976), 45-90. }

{\small \vspace*{-0.6em} }

\bibitem {studia}{\small D. Pellegrino, \textit{Cotype and absolutely summing
homogeneous polynomials in $\mathcal{L}_{p}$ spaces}, Studia Math.
\textbf{157} (2003), 121-131. }

{\small \vspace*{-0.6em} }

\bibitem {ann}{\small D. Pellegrino, \textit{On scalar-valued nonlinear
absolutely summing mappings}, Ann. Polon. Math. \textbf{83} (2004),
281-288. }

{\small \vspace*{-0.6em} }

\bibitem {pie}{\small A. Pietsch, \textit{Operator ideals}, North-Holland, 1980. }

{\small \vspace*{-0.6em} }

\bibitem {Tal}{\small M. Talagrand, \textit{Cotype and (q,1)-summing norms in
Banach spaces}, Invent. Math. \textbf{110} (1992), 545-556. }

\end{thebibliography}
\end{document}